\documentclass[a4paper,fleqn]{cas-sc}
\usepackage[authoryear]{natbib}

\usepackage{amsthm,amsmath,amssymb}
\usepackage{mathtools,setspace,enumitem}

\theoremstyle{plain}
\newtheorem{theorem}{Theorem}[section]
\newtheorem{remark}[theorem]{Remark}
\newtheorem{claim}[theorem]{Claim}
\newtheorem*{theorem*}{Theorem}
\newtheorem*{claim*}{Claim}
\newtheorem*{remark*}{Remark}
\newcommand{\R}{\mathbb{R}}
\newcommand{\E}{\mathbb{E}}
\newcommand{\tX}{\tilde{X}}
\newcommand{\tY}{\tilde{Y}}

\newcommand{\p}{\mathcal{P}}
\newcommand{\g}{\mathcal{G}}

\newcommand{\e}{\varepsilon}

\newcommand{\be}{\beta}

\newcommand{\te}{\theta}
\newcommand{\hte}{\hat{\theta}}

\newcommand{\mtr}{\mathrm{tr}}

\begin{document}
\let\WriteBookmarks\relax
\def\floatpagepagefraction{1}
\def\textpagefraction{.001}

\title[mode=title]{Bidirectional Random Projections} 
\shorttitle{Bidirectional Random Projections}
\shortauthors{}

\author[1]{Chao Lan}
\ead{clan@ou.edu}
\author[1]{Luyuan Yang}
\affiliation[1]{
organization={
School of Computer Science, 
University of Oklahoma}, 
country = {U.S.}
}

\begin{abstract}
This paper analyzes  
bidirectional random 
projections for ordinary 
least squares (OLS) regression 
under the fixed design setting. 
Let $(X,Y) \in \R^{n \times p} 
\times \R^n$ be a sample and 
$R \in \R^{n_1 \times n}, 
W \in \R^{p \times p_1}$ be 
two properly distributed 
random projections. 
We develop an expected excess 
loss bound for the OLS estimator 
built on $(WXR, WY)$. Compared 
to an established bound for 
OLS estimator built on $(XR, Y)$, 
the gap is approximately $O\left( p_1 
+ C \frac{1}{p_1} \right)$, 
where $C$ scales with $n_1/n$ 
and can be negative for small $n_1/n$. 
Its implications are confirmed 
by numerical results on 
real-world data. 
\end{abstract}

\begin{keywords}
Random Projection \sep 
Bidirectional Random Projection \sep 
Ordinary Least Square \sep 
Expected Excess Loss \sep 
High Dimensional Data 
\end{keywords}

\maketitle

\section{Introduction}

It is well-known that 
ordinary least squares (OLS) regression 
can be scaled up for high-dimensional data 
by using random projection to reduce 
data dimension. Excess loss for 
the projected OLS (pOLS) estimator has been well studied 
\citep{maillard2009compressed,fard2012compressed,kaban2014new,slawski2017compressed}. 
On the other hand, random projection 
can also be used to reduce data number 
for faster estimation 
\citep{zhou2007compressed}. As a result, 
when working with large-scale high 
dimensional data, it may be desirable to 
apply random projection in both directions. 
However, statistical properties of such 
bi-directionally projected OLS (bOLS) 
estimator remains unknown. 

This paper develops an excess 
loss bound for bOLS estimator. 
By inspecting its gap to the 
bound for pOLS estimator, 
we obtain three implications: 
(i) bOLS performs worse than pOLS; 
(ii) as the projected data dimension increases, their gap 
first decreases and then increases, and the pivot happens earlier 
when the reduced data number is smaller; 
(iii) as the projected data dimension 
increases, both losses first decrease 
and then increase, but bOLS's 
loss bounces back earlier. These 
theoretical implications are confirmed 
by numerical results on real-world data. 

\section{Preliminaries}

Consider the fixed design setting. 
Let $x_1, \ldots, x_n \in \R^p$ be 
$n$ fixed points and $y_1, \ldots, 
y_n$ be their labels generated by 
$y_i = x_i^T \be_* + \e_i$, 
where $\be_* \in R^p$ is fixed 
and $\e_i$'s are independent 
random noises with $\E \e_i = 0$ 
and $var(\e_i) = \sigma^2$ for 
some fixed $\sigma >0$.\ \  
Let $X \in \R^{n \times p}$ 
be a sample matrix with 
$x_i^T$ on the $i_{th}$ row, 
$Y \in \R^n$ be a label 
vector with $Y_i = y_i$ 
and $E \in \R^n$ be a 
noise vector with $E_i = \e_i$. 
Then, the above setting 
can be expressed as 
\begin{equation}
Y = X \be_* + E.    
\end{equation}

Let $\p(v^2)$ denote a Gaussian 
distribution on $\R$ with zero mean 
and variance $v^2$. 
Consider two random projections
$W \in \R^{n_1 \times n}$ 
and $R \in \R^{p \times p_1}$. 
For $W$, assume it has i.i.d. 
entries following 
\begin{equation}
\label{eq:sampleW}
W_{ij} \sim \p(1/n).      
\end{equation}
For $R$, assume it has i.i.d. 
entries following 
\begin{equation}
\label{eq:sampleR}
R_{ij} \sim \p(1/p_1).     
\end{equation}
Consider two OLS estimators. 
Let $\hat{\te}_R \in R^{p_1}$ 
be a projected estimator built 
on $(XR, Y)$ i.e., 
\begin{equation}
\label{eq:def_teR}
\hat{\te}_R = \arg\min_{\te} 
\frac{1}{n} \parallel X R \te - Y \parallel^2,      
\end{equation}
where $||\cdot||$ is $L_2$ norm. 
Let $\hat{\te}_{RW}$ be a bi-directionally 
projected estimator built on $(WXR, WY)$ i.e., 
\begin{equation}
\hat{\te}_{RW} = \arg\min_{\te} 
\frac{1}{n_1} \parallel W X R \te - W Y \parallel^2.
\end{equation}

We are interested in comparing  
$\hat{\te}_{RW}$ with $\hat{\te}_R$. 
Comparison will be based on 
two loss functions: the 
expected square loss of 
any estimator $\be \in \R^p$
is defined as 
\begin{equation}
L(\be) = \E_{Y}\ \frac{1}{n} 
\parallel X \be - Y \parallel^2. 
\end{equation}
Given $R$, the expected square loss 
of any estimator $\te \in \R^{p_1}$ 
is defined as 
\begin{equation}
L_R(\te) = \E_{Y \mid R}\ \frac{1}{n} 
\parallel X R \te - Y \parallel^2.     
\end{equation}

Finally, 
let $I_s$ be an identity matrix of 
size $s$; for matrix $G$, let 
$||\cdot||_G$ be the induced 
Mahalanobis norm, $\mtr(G)$ be its 
trace, and $\lambda_{g \uparrow}$ 
and $\lambda_{g \downarrow}$ be its 
largest and smallest singular values 
respectively. (Lower case 
of $G$ is used for its singular values.) 
$\E_{Z}$ means an expectation 
is w.r.t. the randomness 
in $Z$, and $\E_{Z \mid Q}$ means 
the expectation is conditioned on $Q$. 
In consecutive arguments, an 
$\E$ without subscript has the 
same subscript as its preceding $\E$.   

\section{Main Result}
\label{sec:mainresult} 

The following is an established
loss bound for $\hat{\te}_R$. 

\begin{theorem}[\cite{kaban2014new}]
\label{thm:rpbound}
Under the fixed design setting, 
for $p_1 < \min(p, n)$: 
\begin{equation}
\E_{R, \hte_R} [L_{R}(\hat{\te}_R)] 
- L(\be_*) \leq \sigma^2 \frac{p_1}{n} 
+ \frac{1}{p_1} || \be_* ||^2_{M}\ 
:= B_R,   
\end{equation}
where $M = \Sigma + (1+\kappa)\, 
\mtr(\Sigma) I_p$ and 
$\Sigma = \frac{1}{n} X^T X$. 
\end{theorem}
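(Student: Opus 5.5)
The plan is to use the standard decomposition of fixed-design OLS excess loss into a variance term and a bias term, bound each separately, and then take the expectation over $R$. Fix $R$ and let $Z = XR \in \R^{n \times p_1}$, with $P_Z$ the orthogonal projector onto the column space of $Z$. Then $XR\hte_R = P_Z Y$. Expanding $\E_{Y\mid R}\frac1n\|P_Z Y - Y'\|^2$, where $Y'$ is an independent copy representing the loss evaluation, and subtracting $L(\be_*) = \sigma^2$ gives
\begin{equation*}
\E_{\hte_R}[L_R(\hte_R)] - L(\be_*) = \frac{\sigma^2}{n}\mtr(P_Z) + \frac1n \|(I_n - P_Z) X\be_*\|^2 .
\end{equation*}
Since $p_1 < \min(p,n)$, we have $\mtr(P_Z) = \mathrm{rank}(XR) \le p_1$. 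This yields the variance part $\sigma^2 p_1/n$ for every $R$, so no expectation is needed there.

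For the bias part, I will use the fact that $P_Z$ gives the best approximation within the column space of $XR$. Hence, for any $\te \in \R^{p_1}$,
\begin{equation*}
\frac1n\|(I-P_Z)X\be_*\|^2 \le \frac1n\|XR\te - X\be_*\|^2 .
\end{equation*}
I will choose the data-independent candidate $\te = R^T\be_*$ and take $\E_R$. This gives $\E_R \|(RR^T - I_p)\be_*\|^2_{\Sigma}$. With $R_{ij}\sim\p(1/p_1)$, one has $\E[RR^T] = I_p$, so the quantity equals $\E[\be_*^T R R^T \Sigma R R^T \be_*] - \be_*^T\Sigma\be_*$.

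The main computation, and the step I expect to be the obstacle, is the fourth moment $\E[RR^T\Sigma RR^T]$. Write $RR^T = \sum_{k=1}^{p_1} r_k r_k^T$ with $r_k \sim N(0, I_p/p_1)$ independent. The cross terms $k\neq l$ contribute $(1 - 1/p_1)\Sigma$. The diagonal terms use the Gaussian identity $\E[r r^T A r r^T] = \frac{1}{p_1^2}(2A + \mtr(A) I)$ for symmetric $A$, and together they contribute $\frac{1}{p_1}(2\Sigma + \mtr(\Sigma) I_p)$. Subtracting $\Sigma$ leaves
\begin{equation*}
\frac{1}{p_1}\,\be_*^T\bigl(\Sigma + \mtr(\Sigma) I_p\bigr)\be_*
\le \frac{1}{p_1}\|\be_*\|^2_M ,
\end{equation*}
since $\kappa \ge 0$ and $\mtr(\Sigma)\|\be_*\|^2 \ge 0$.

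The constant $\kappa$ in $M$ is presumably there to accommodate non-Gaussian (e.g. sub-Gaussian) $R$ through an excess-kurtosis term, so in the Gaussian case $\kappa = 0$ suffices and the inequality has slack. Adding the variance and bias bounds gives $B_R$. The only care needed is to keep the order of expectations straight: average over $Y$ given $R$ first, then over $R$. Because the variance bound holds uniformly in $R$, this ordering causes no difficulty.
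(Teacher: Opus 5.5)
Your proposal is correct and follows essentially the argument of \cite{kaban2014new} that the paper relies on; its key steps are quoted in Remark~\ref{rem:priorkaban} and mirrored in the roadmap for Theorem~\ref{cor:brpbound}. That argument has the same three parts as yours: the variance term $\frac{\sigma^2}{n}\mathrm{rank}(XR)\le\sigma^2\frac{p_1}{n}$, the bias term bounded through the candidate $\te=R^T\be_*$ (i.e.\ $L_R(R^T\be_*)-L(\be_*)=\frac1n\|X\be_*-XRR^T\be_*\|^2$), and its $R$-expectation computed from fourth moments as $\frac{1}{p_1}\be_*^T(\Sigma+\mtr(\Sigma)I_p)\be_*$, with $\kappa$ an excess-kurtosis correction that vanishes for Gaussian $R$ (so your final step is then an equality and the slack lies only in the candidate choice and in $\mathrm{rank}(XR)\le p_1$).
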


Our bound for $\hat{\te}_{RW}$ is 
stated as follows. 

\begin{theorem} 
\label{cor:brpbound}
Under the fixed design setting, 
there exists a 
$W_0 \in \R^{n_1 \times n}$ 
such that for $p_1 < \min(p, n_1)$: 
\begin{equation}
\label{eq:brpbound}
\E_{R, W, \hte_{RW}} 
[L_{R}(\hat{\te}_{RW})] - L(\be_*) 
\leq C_1 \sigma^2 \frac{p_1}{n} 
+ C_2 \frac{1}{p_1} || \be_* ||_{M}^2 
 + b\ := B_{RW},  
\end{equation}
where $M$ is specified in 
Theorem \ref{thm:rpbound}, 
$C_1 = \E_W[\lambda^2_{w \uparrow}]
\lambda^{-2}_{w_0 \downarrow}$,  
$C_2 = \frac{n_1}{n} 
\lambda^{-2}_{w_0 \downarrow}$ 
and $b = (C_2 - 1)L(\be_*)$.
\end{theorem}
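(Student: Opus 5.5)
We will compare $\hat{\te}_{RW}$ with the \emph{oracle} estimator for the projected problem and then reduce to Theorem \ref{thm:rpbound}. Conditional on $R$, let $\te_R^* = \arg\min_{\te} L_R(\te)$. Since $Y = X\be_* + E$, we have $L_R(\te) = \frac{1}{n}\|XR\te - X\be_*\|^2 + \sigma^2$, and $L(\be_*) = \sigma^2$. The excess loss of any $\te$ then splits as
\begin{equation*}
L_R(\te) - L(\be_*) = \tfrac{1}{n}\|XR(\te - \te_R^*)\|^2 + \bigl(L_R(\te_R^*) - L(\be_*)\bigr).
\end{equation*}
This holds because $XR\te_R^*$ is the orthogonal projection of $X\be_*$ onto $\mathrm{col}(XR)$, so the cross term vanishes. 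We therefore have to control a variance-type term $\frac1n\|XR(\hat{\te}_{RW}-\te_R^*)\|^2$ and an approximation term. Kab\'an's analysis behind Theorem \ref{thm:rpbound} already gives $\E_R[L_R(\te_R^*)] - L(\be_*) \le \frac{1}{p_1}\|\be_*\|_M^2$.

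\textbf{Variance term.} Write $A = XR \in \R^{n\times p_1}$, which has full column rank a.s.\ since $p_1<\min(p,n_1)$. Then $WA$ also has full column rank a.s., so $\hat{\te}_{RW} = (A^TW^TWA)^{-1}A^TW^TWY$. Substituting $Y = A\te_R^* + (X\be_* - A\te_R^*) + E$ gives
\begin{equation*}
\hat{\te}_{RW} - \te_R^* = (A^TW^TWA)^{-1}A^TW^TW\bigl(r + E\bigr), \qquad r = X\be_* - A\te_R^*.
\end{equation*}
For the noise part, $\E_E$ of $\frac1n\|A(\cdot)\|^2$ equals $\frac{\sigma^2}{n}\mtr\bigl(P_W W W^T\bigr)$, where $P_W$ is the rank-$p_1$ projector onto $\mathrm{col}(WA)$, after rewriting with the pseudo-inverse of $WA$. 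We bound $\|A v\| \le \lambda^{-1}_{w\downarrow}\|WAv\|$ on the relevant subspace and $\mtr(P_W WW^T)\le p_1\lambda^2_{w\uparrow}$. This yields roughly $\lambda_{w\uparrow}^2\lambda_{w\downarrow}^{-2}\sigma^2 p_1/n$.

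The $\lambda^{-2}_{w\downarrow}$ factor is random and correlated with $\lambda^2_{w\uparrow}$. To decouple them I would use the existential $W_0$. By a mean-value or pigeonhole argument over the distribution of $W$, there is a realization $W_0$ whose smallest singular value $\lambda_{w_0\downarrow}$ lower-bounds the relevant ratio in expectation. Concretely, I would pick $W_0$ minimizing $\lambda_{w\downarrow}$ on a probability-one set of the relevant restriction, so that $\lambda^{-2}_{w\downarrow}\le\lambda^{-2}_{w_0\downarrow}$. That leaves $\E_W[\lambda^2_{w\uparrow}]$ and gives $C_1$.

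\textbf{Bias term.} The residual part $\frac1n\|A(A^TW^TWA)^{-1}A^TW^TWr\|^2$ is at most $\lambda^{-2}_{w_0\downarrow}\frac1n\|P_W W r\|^2 \le \lambda^{-2}_{w_0\downarrow}\frac1n\|Wr\|^2$. Taking $\E_W$ uses $\E[W^TW] = \frac{n_1}{n}I_n$ and gives $\lambda^{-2}_{w_0\downarrow}\frac{n_1}{n}\cdot\frac1n\|r\|^2 = C_2(L_R(\te_R^*) - L(\be_*))$. Adding the approximation term and regrouping leads to the constants $C_2\frac1{p_1}\|\be_*\|_M^2$ and the offset $b=(C_2-1)L(\be_*)$. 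The offset comes from writing $C_2\,\frac1n\|r\|^2$ as $C_2 L_R(\te_R^*) - C_2\sigma^2$, bounding $L_R$ through Theorem \ref{thm:rpbound}, and collecting the $\sigma^2 = L(\be_*)$ terms.

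The main obstacle is the decoupling step. Inverse singular values of $WA$ (or of $W$ restricted to $\mathrm{col}(A)$) appear inside an expectation together with $\lambda_{w\uparrow}$ and with $r$. The existential $W_0$ must be chosen so that a deterministic $\lambda^{-2}_{w_0\downarrow}$ can be pulled out. I would first try to justify this with the lower tail of the smallest singular value of an $n_1\times p_1$ Gaussian matrix, and otherwise fall back on the existence statement as a worst-case realization.
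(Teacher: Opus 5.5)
Your proposal has two genuine gaps. First, the decoupling step fails as written. For Gaussian $W$, the smallest singular value of $W$ restricted to $\mathrm{col}(XR)$ (i.e.\ of the $n_1\times p_1$ Gaussian matrix $WU$, with $U$ an orthonormal basis of $\mathrm{col}(XR)$) puts positive probability on every interval $(0,\epsilon)$, so its essential infimum is $0$. A $W_0$ with $\lambda^{-2}_{w\downarrow}\le\lambda^{-2}_{w_0\downarrow}$ almost surely therefore forces $\lambda_{w_0\downarrow}=0$ and $C_1=C_2=\infty$. Lower-tail bounds only give high-probability control, not the almost-sure domination you use to pull $\lambda^{-2}_{w_0\downarrow}$ out of $\E_W$.

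The paper's $W_0$ is an average, not a worst case. Claim~\ref{claim:05} keeps $\lambda^2_{w\downarrow}$ inside the expectation and applies a weighted mean value theorem once: $\E[\lambda^2_{w\downarrow}g]=\lambda^2_{w_0\downarrow}\E[g]$, with the single weight $g=\|XR\hte_{RW}-Y\|^2$ over the joint law of $(R,W,Y)$. If you adopt that device, separate averages for your variance term (weight $\lambda^2_{w\uparrow}$) and your bias term (weight $\|P_WWr\|^2$, which also depends on $R$) give different $W_0$'s. You would need one combined weight averaged jointly over $(R,W)$, e.g.\ $\sigma^2\frac{p_1}{n}\lambda^2_{w\uparrow}+\frac1n\|Wr\|^2$ after using $\|P_WWr\|\le\|Wr\|$. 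You would also have to check that the resulting average of $\lambda^{-2}$ is finite; this fails at $p_1=n_1-1$.

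Second, the constants do not come out as you claim. Since $L_R(\te_R^*)=\sigma^2+\frac1n\|r\|^2$, rewriting $C_2\frac1n\|r\|^2$ as $C_2L_R(\te_R^*)-C_2\sigma^2$ and invoking Kab\'an's bound makes the $\sigma^2$ terms cancel identically. Adding your approximation term (coefficient $1$), your route yields $C_1\sigma^2\frac{p_1}{n}+(1+C_2)\frac1{p_1}\|\be_*\|_M^2$, with no offset. That is at most $B_{RW}$ only when $\frac1{p_1}\|\be_*\|_M^2\le(C_2-1)L(\be_*)$. This fails, e.g., whenever $C_2\le1$, which is the small-$n_1/n$ regime the paper emphasizes.

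In the paper, $b$ comes from a different architecture. Everything is first bounded in the sketched loss $L_{RW}$ around its own minimizer $\hte_*$: Claims~\ref{claim:02}--\ref{claim:04}, then $L_{RW}(\hte_*)\le L_{RW}(R^T\be_*)$, $\E_W L_{RW}(R^T\be_*)=L_R(R^T\be_*)$, and Remark~\ref{rem:priorkaban}. This gives $\sigma^2\frac{p_1}{n_1}\E_W[\lambda^2_{w\uparrow}]+\frac1{p_1}\|\be_*\|_M^2+L(\be_*)$. Only then is the entire bound, noise floor included, multiplied by $\frac{n_1}{n}\lambda^{-2}_{w_0\downarrow}$ via Claim~\ref{claim:05}; subtracting $L(\be_*)$ produces $b$.

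Be aware, though, that Claim~\ref{claim:05} applies $\|Wz\|\ge\lambda_{w\downarrow}\|z\|$ to $z=XR\hte_{RW}-Y$, which is not confined to a $p_1$-dimensional subspace. For $n_1<n$ the matrix $W$ has a nontrivial kernel, so that inequality does not hold in general. Your restriction to $\mathrm{col}(XR)$ is the sound form of it, and it is exactly what keeps the noise floor from being rescaled. Your route therefore naturally proves a different bound. Reaching the literal statement from it would require exploiting that $W_0$ is merely existential and that $B_{RW}$ increases without bound as $\lambda_{w_0\downarrow}\to0$, which is not the argument you gave.
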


It is not hard to see Theorem \ref{cor:brpbound} recovers 
Theorem \ref{thm:rpbound} 
if $W = I_n$ (so that $n_1 = n$ 
and all $\lambda$'s 
equal to 1).
In general, we are interested 
in the gap between two bounds 
and how it behaves as $p_1$ increases. 
To this end, more concrete ideas are needed 
on $\lambda_{w_0 \downarrow}$. 
Since $W$ has i.i.d. entries from 
$N(0,1/n)$, \citep{bai1993limit} showed 
that asymptotically $\lambda_{w \downarrow}$ 
concentrates at 
$1-\sqrt{n_1/n}$ while 
$\lambda_{w \uparrow}$ 
is concentrated at 
$1+ \sqrt{n_1/n}$. 
Then, by the choice of 
$W_0$ in (\ref{eq:road_09}), 
\begin{equation}
\lambda^2_{w_0 \downarrow} 
= \frac{\E_{R, W, \hte_{RW}} 
\lambda^2_{w \downarrow} 
L_{R}(\hte_{RW}) 
}{\E_{R, \hte_{RW}} L_{R}(\hte_{RW})} 
\approx (1 - \sqrt{n_1/n})^2. 
\end{equation}
Let $r = \sqrt{n_1/n}$. The above implies
\begin{equation}
\label{eq:c1}
C_1 = \E_W[\lambda^2_{w \uparrow}] 
\lambda^{-2}_{w_0 \downarrow}  
\approx (\sqrt{n/n_1}+1)^2 
(\sqrt{n/n_1}-1)^{-2} 
= (1+r)^2 (1-r)^{-2}, 
\end{equation}
and 
\begin{equation}
C_2 = \frac{n_1}{n} 
\lambda^{-2}_{w_0 \downarrow}
\approx {(\sqrt{n/n_1}-1)^{-2}} 
= r^2 (1-r)^{-2}.
\end{equation}
Now we can evaluate the gap 
between two bounds. 
It is not hard to see 
\begin{equation}
\label{eq:gap2}
B_{RW} - B_R = (C_1 -1) 
\frac{\sigma^2}{n} p_1 \ + 
\ (C_2-1) || \be_* ||_{M}^2 
\frac{1}{p_1} + b
\approx \frac{4r}{(1-r)^2} 
\frac{\sigma^2}{n}  p_1 
+ \frac{2r - 1}{(1-r)^2} 
||\be_*||_M^2 \frac{1}{p_1} + b.
\end{equation}
The gap is generally positive 
(e.g., for large $r$ or $p_1$),  
which suggests $\hte_{RW}$ performs 
worse than $\hte_{R}$. 

To inspect how the gap behaves 
as $p_1$ increases, we consider 
three scenarios: (i) $p_1$ is 
large, (ii) $p_1$ is small and 
$r$ is large, (iii) $p_1$ is 
small and $r$ is small. 
In (i), the gap is dominated by 
$\frac{2r}{(1-r)^2} \frac{\sigma^2}{n} p_1$ and thus increases 
as $p_1$ grows. In (ii), 
the gap is dominated by 
$\frac{2r - 1}{(1-r)^2} 
||\be_*||_M^2 \frac{1}{p_1}$, 
which has $2r - 1 > 0$ for large 
enough $r$ and thus decreases as $p_1$ grows. In (iii), the gap is 
dominated by the same term, which has $2r - 1 < 0$ for small enough $r$ and thus increases as $p_1$ grows. 
It is noted that, although $r = 0.5$ is the pivot for $\frac{2r - 1}{(1-r)^2} ||\be_*||_M^2 \frac{1}{p_1}$ to transit 
between increasing and decreasing, 
it is not necessarily the pivot for the gap since all constants are 
approximated. Moreover, the pivot for the gap is not necessarily attainable. 
For example, (\ref{eq:gap2}) suggests 
the pivot is approximately at $p_1 = \sqrt{\frac{2r-1}{4r}} 
\frac{\sqrt{n}}{\sigma} ||\be_*||_M$, 
which is only achievable 
if the right side 
is bigger than 1 and $r > 1/2$. Thus, when $r$ is very small (depending on $\sqrt{n} 
||\be_*||_M / \sigma$), 
the gap will only increase 
as $p_1$ increases. 
In summary, as $p_1$ increases, 
the gap generally decrease first 
and then increase; when $r$ is very small, the gap may only increase 
as $p_1$ increases. 

Finally, we inspect the 
pivot of each bound. 
As $p_1$ increases, 
both bounds first decrease 
and then increase. The pivot  
point of $B_R$ is $\sqrt{n ||\be_*||_M^2/\sigma^2}$  
and for $B_{RW}$ it is $C_2/C_1 
\sqrt{n ||\be_*||_M^2/\sigma^2}$. 
It is not hard to see  
$C_2 / C_1 \approx \frac{r^2}{(1+r)^2} 
\leq 1$, which implies the 
loss of $\hte_{RW}$ will bounce back earlier than the loss of $\hte_{R}$. 
\section{Proof of Theorem \ref{cor:brpbound}}

Given $(R,W)$, let the expected 
square loss of $\te$ be  
\begin{equation}
L_{RW}(\te) = \E_{Y \mid R, W}\ 
\frac{1}{n_1} \parallel W X R \te 
- W Y \parallel^2.     
\end{equation} 
Let $\hte_*$ be 
a minimizer of $L_{RW}(\te)$ i.e., 
\begin{equation}
\hte_* = \arg\min_{\te 
\in \R^{p_1}} L_{RW}(\te).
\end{equation}

\subsection{Roadmap}
\vspace{5pt}

We first bound $\E L_{RW}(\hte_{RW})$ 
and then connect it to $\E [L_{R}(\hte_{RW})]$. Given $(R,W)$, it can be 
shown (Claim \ref{claim:02}) 
\begin{equation}
\E_{\hte_{RW} \mid R,W} 
[\hte_{RW}] = \hte_*, 
\end{equation}
which implies (Claim \ref{claim:03}) 
\begin{equation}
\label{eq:road_01}
\E_{\hte_{RW} \mid R,W} [L_{RW}(\hte_{RW})] \leq \E \frac{1}{n_1}
|| W XR \hte_{RW} - W XR \hte_* ||^2 
+ L_{RW}(\hte_*).
\end{equation}
On the right side, the first term 
can be bounded as (Claim \ref{claim:04}) 
\begin{equation}
\label{eq:road_02}
\E_{\hte_{RW} \mid R,W} 
\frac{1}{n_1} 
|| WXR \hte_{RW} - WXR \hte_* ||^2 
\leq \sigma^2 \frac{p_1}{n_1} 
\lambda_{w \uparrow}^2. 
\end{equation}
The second term can be bounded 
as $L_{RW}(\hte_*) \leq 
L_{RW}(R^T\be_*)$ since 
$\hte_*$ minimizes $L_{RW}(\te)$. 
Plugging both back to 
(\ref{eq:road_01}) and 
taking expectation w.r.t. 
$W$ on both sides, we have 
\begin{equation}
\label{eq:road_04b}
\E_{W, \hte_{RW} \mid R} 
[L_{RW}(\hte_{RW})] 
\leq \sigma^2 \frac{p_1}{n_1} 
\E_W [\lambda_{w \uparrow}^2] 
+ \E_W [L_{RW}(R^T\be_*)] 
= \sigma^2 \frac{p_1}{n_1} 
\E [\lambda_{w \uparrow}^2] 
+ L_{R}(R^T\be_*),  
\end{equation}
where the second equality 
is by the fact that 
$\E_W ||W z||^2 
= \frac{n_1}{n} ||z||^2$ for 
any $z$ not depending on $W$. 

For the last term $L_R(R^T \be_*)$, 
its expectation has been bounded 
in \citep{kaban2014new} as 
(Remark \ref{rem:priorkaban})
\begin{equation}
\E_{R} [L_{R}(R^T\be_*)] 
\leq L(\be_*) + \frac{1}{p_1} 
||\be_*||_M, 
\end{equation}
where $M = \Sigma + (1+\kappa)\, 
tr(\Sigma) I_p$ and 
$\Sigma = \frac{1}{n} X^T X$. 
Plugging this in (\ref{eq:road_04b})
after taking expectation 
w.r.t. $R$ on both sides, 
\begin{equation}
\label{eq:road_08}
\E_{R, W, \hte_{RW}} 
[L_{RW}(\hte_{RW})] 
\leq \sigma^2 \frac{p_1}{n_1} 
\E_W [\lambda_{w \uparrow}^2] 
+ \frac{1}{p_1} ||\be_*||_M 
+ L(\be_*). 
\end{equation}
Finally, it can be shown (Claim \ref{claim:05}) 
there exists a $W_0 \in \R^{n_1 \times 
n}$ depending on $R,W,\hte_{RW}$ 
distributions such that 
\begin{equation}
\label{eq:road_09}
\E_{R, W, \hte_{RW}} 
[L_{RW}(\hte_{RW})] 
\geq \E_{R, W, \hte_{RW}} 
[\frac{n}{n_1} 
\lambda^2_{w \downarrow} 
L_{R}(\hte_{RW}) ] = \frac{n}{n_1} 
\lambda^2_{w_0 \downarrow} 
\E_{R, \hte_{RW}} [L_{R}(\hte_{RW})] 
\end{equation}
Combining (\ref{eq:road_08}) 
and (\ref{eq:road_09}) proves 
the theorem. 

\subsection{Supporting Claims}
\vspace{5pt}

We first remark two basic 
observations without proof. 
For any $z \in \R^{n \times k}$ 
(of any $k > 0$), there is 
\begin{equation}
\lambda^2_{w \downarrow} 
||z||^2 \leq ||W z||^2 \leq 
\lambda^2_{w \uparrow} 
||z||^2.    
\end{equation}
In addition, 
the random noise vector 
satisfies $\E_{Y \mid R, W} [E] 
= \vec{0}$ and $\E_{Y \mid R, W} 
[EE^T] = \sigma^2 I_{n}$.

\begin{claim}
\label{claim:01}
$L_R(\hte_{RW}) \leq \frac{n_1}{n} 
\frac{1}{\lambda_{w \downarrow}^{2}} 
L_{RW}(\hte_{RW})$. 
\end{claim}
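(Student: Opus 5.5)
The plan is to prove the inequality pointwise in the noise and then take the expectation. I would work conditionally on $(R,W)$ and treat $\te=\hte_{RW}$ as a fixed vector in $\R^{p_1}$ inside both loss functions. This matches how $L_R(\cdot)$ and $L_{RW}(\cdot)$ are defined: each is a function of its argument, with the expectation $\E_{Y\mid R}$ or $\E_{Y\mid R,W}$ taken over the label vector appearing in the loss. With $\te$ fixed, set $z = XR\te - Y \in \R^n$. The two losses are then
\begin{equation*}
L_R(\te) = \E_{Y\mid R}\,\frac{1}{n}\|z\|^2, \qquad L_{RW}(\te) = \E_{Y\mid R,W}\,\frac{1}{n_1}\|Wz\|^2 .
\end{equation*}

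The key step is the first of the two basic observations stated just before the claim, applied with $k=1$:
\begin{equation*}
\lambda^2_{w\downarrow}\|z\|^2 \le \|Wz\|^2 ,
\end{equation*}
which holds for every realization of $Y$. Dividing by $n_1$ gives $\frac{\lambda^2_{w\downarrow}}{n_1}\|z\|^2 \le \frac{1}{n_1}\|Wz\|^2$.

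Next I take $\E_{Y\mid R,W}$ on both sides. Since $Y = X\be_*+E$ and $E$ is independent of $(R,W)$, the conditional law of $Y$ given $(R,W)$ equals its law given $R$. Hence $\E_{Y\mid R,W}\|z\|^2 = \E_{Y\mid R}\|z\|^2 = n\,L_R(\te)$. This yields
\begin{equation*}
\frac{n}{n_1}\lambda^2_{w\downarrow} L_R(\te) \le L_{RW}(\te).
\end{equation*}
Dividing by $\lambda^2_{w\downarrow}$, which is assumed positive, and substituting $\te=\hte_{RW}$ gives $L_R(\hte_{RW}) \le \frac{n_1}{n}\lambda^{-2}_{w\downarrow} L_{RW}(\hte_{RW})$, which is the claim.

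The main obstacle is the lower singular-value bound itself. For a wide matrix $W\in\R^{n_1\times n}$ with $n_1<n$, the map $z\mapsto Wz$ has a nontrivial kernel, so $\inf_{\|z\|=1}\|Wz\|=0$. The inequality $\lambda^2_{w\downarrow}\|z\|^2\le\|Wz\|^2$ is only literally valid if $\lambda_{w\downarrow}$ is read as the smallest nonzero singular value and $z$ lies in the row space of $W$. In the paper this bound is taken as a stated observation, and I will invoke it as given. A careful version would either restrict the argument to the component of $z$ in the row space of $W$, or replace $\lambda_{w\downarrow}$ by an effective constant that holds in expectation; the latter is essentially what the averaged $W_0$ in (\ref{eq:road_09}) does.

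A secondary subtlety is that $\hte_{RW}$ itself depends on $Y$. This is handled by the convention above: the loss is evaluated at the realized estimator, with an independent copy of the label noise inside $\E_{Y\mid\cdot}$.
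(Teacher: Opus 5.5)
Your route is the paper's route: apply $\|Wz\|\ge\lambda_{w\downarrow}\|z\|$ to $z=XR\hte_{RW}-Y$ and take the conditional expectation over $Y$. Your normalizations are cleaner than the paper's displayed chain. That chain calls $\frac1n\E\|WXR\hte_{RW}-WY\|^2$ ``$L_R(\hte_{RW})$'', but it is $\frac{n_1}{n}L_{RW}(\hte_{RW})$. It also calls $\E\frac{1}{n_1}\|XR\hte_{RW}-Y\|^2$ ``$L_R(\hte_{RW})$'', but it is $\frac{n}{n_1}L_R(\hte_{RW})$.

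However, the obstacle you flag is a genuine gap, shared by the paper, and it cannot be taken as given: for $n_1<n$ the claim itself is false. With $v=XR\te-X\be_*$ and fresh noise, $L_R(\te)=\frac1n\|v\|^2+\sigma^2$ and $L_{RW}(\te)=\frac{1}{n_1}\bigl(\|Wv\|^2+\sigma^2\mtr(WW^T)\bigr)$.

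\textbf{A deterministic counterexample.} For $W=[I_{n_1}\ 0]$ all singular values equal $1$, and the claim reads $\frac1n\sum_{i=1}^{n}\ell_i\le\frac1n\sum_{i=1}^{n_1}\ell_i$, where $\ell_i=\E(x_i^TR\te-y_i)^2\ge\sigma^2>0$. This fails for every $\te$. More generally, suppose $WW^T=c^2I_{n_1}$ and $P$ is the projector onto the row space of $W$. Then $\lambda^2_{w\downarrow}L_R(\te)-\frac{n_1}{n}L_{RW}(\te)=\frac{c^2}{n}\bigl(\|(I-P)v\|^2+(n-n_1)\sigma^2\bigr)>0$. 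So precisely the part of the loss living in $\ker W$ is lost.

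\textbf{It also fails for typical Gaussian $W$.} Here $\lambda^2_{w\downarrow}\approx(1-r)^2$, whereas $\frac1n\mtr(WW^T)\approx r^2$. Also $\|Wv\|^2\approx r^2\|v\|^2$ uniformly on the fixed span of $XR$ and $X\be_*$. Hence for $r<1/2$ and $p_1\ll n_1$ the inequality fails with high probability. If $\lambda_{w\downarrow}$ is instead read as $\min_{\|z\|=1}\|Wz\|$, the observation holds, but then $\lambda_{w\downarrow}=0$ and the claim says nothing.

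The same computation shows why your two suggested repairs do not work.
\begin{itemize}
\item Restricting to the row-space component controls only $\|Pz\|^2$. But $L_R$ also charges $\E\|(I-P)z\|^2\ge(n-n_1)\sigma^2$, which $W$ cannot see.
\item The $W_0$ device of Claim~\ref{claim:05} first applies the same pointwise bound $\|Wz\|^2\ge\lambda^2_{w\downarrow}\|z\|^2$ and only then averages, so it inherits the defect. The identity $\E_W\|Wz\|^2=\frac{n_1}{n}\|z\|^2$ is not available either, because $z$ depends on $W$ through $\hte_{RW}$.
\end{itemize}
Your argument, like the paper's, is sound only when $n_1\ge n$. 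Then $W$ is tall, $\lambda_{w\downarrow}=\min_{\|z\|=1}\|Wz\|>0$ almost surely, and every step you wrote goes through verbatim.
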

\begin{proof}
Since $||W z|| \geq 
\lambda_{w \downarrow} 
||z||$ for any $z$, we have 
\begin{equation}
L_R(\hte_{RW}) = \frac{1}{n}
\E_{Y \mid R, W}||WXR \hte_{RW} - W Y||^2 
\geq \frac{1}{n} \lambda_{w \downarrow}^2  
\E ||XR \hte_{RW} - Y ||^2 
= \frac{n_1}{n} \lambda_{w \downarrow}^2  
\E \frac{1}{n_1} 
||XR \hte_{RW} - Y ||^2. 
\end{equation}
The last expectation is 
$L_R(\hte_{RW})$. This proves the claim.
\end{proof}

\begin{claim}
\label{claim:02}
$\E_{Y \mid R,W} 
[\hte_{RW}] = \hte_*$. 
\end{claim}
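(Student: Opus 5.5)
The plan is to write both $\hte_{RW}$ and $\hte_*$ in closed form and compare them directly. Throughout, $R$ and $W$ are held fixed, so the only randomness comes from the noise vector $E$ through $Y = X\be_* + E$. Set $A = WXR \in \R^{n_1 \times p_1}$. Because $p_1 < \min(p, n_1)$ and $W, R$ are Gaussian, $A$ has full column rank $p_1$ almost surely, so $A^TA$ is invertible. The minimizer of the empirical objective is then the normal-equation solution
\begin{equation*}
\hte_{RW} = (A^TA)^{-1} A^T W Y .
\end{equation*}

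Next we identify $\hte_*$. Expand $L_{RW}(\te) = \E_{Y\mid R,W}\frac{1}{n_1}\|A\te - WX\be_* - WE\|^2$ and use the observations $\E[E]=\vec 0$ and $\E[EE^T]=\sigma^2 I_n$. The cross term vanishes, which gives
\begin{equation*}
L_{RW}(\te) = \frac{1}{n_1}\|A\te - WX\be_*\|^2 + \frac{\sigma^2}{n_1}\mtr(W^TW).
\end{equation*}
The second term does not depend on $\te$. So $\hte_*$ minimizes a deterministic least-squares objective, and
\begin{equation*}
\hte_* = (A^TA)^{-1}A^T W X\be_* .
\end{equation*}

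Finally, take the conditional expectation of $\hte_{RW}$. The map $Y\mapsto (A^TA)^{-1}A^TWY$ is linear and its matrix depends only on $(R,W)$. Hence
\begin{equation*}
\E_{Y\mid R,W}[\hte_{RW}] = (A^TA)^{-1}A^TW\,\E[Y] = (A^TA)^{-1}A^TWX\be_* = \hte_*,
\end{equation*}
which proves the claim.

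The only delicate step is invertibility of $A^TA$. I would justify it by noting that $XR$ has rank $p_1$ almost surely, assuming $X$ has rank at least $p_1$, which is implicit in the OLS setting. Left-multiplying by the Gaussian $W$ with $n_1 > p_1$ then preserves that rank almost surely. Failing that, one can replace inverses by pseudoinverses: $\hte_{RW}$ is still linear in $Y$ with the same operator applied to $\E Y$, and the identity goes through with minimum-norm minimizers.
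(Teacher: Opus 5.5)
Your proposal is correct and follows essentially the same route as the paper. Both arguments write $\hte_{RW}=(\tX^T\tX)^{-1}\tX^T\tY$ and $\hte_*=(\tX^T\tX)^{-1}\tX^T\E_{Y\mid R,W}[\tY]$ with $\tX=WXR$, $\tY=WY$, and conclude by linearity of expectation; the differences are minor. You identify $\hte_*$ through the bias--variance split of $L_{RW}$ instead of by differentiating, and you justify almost-sure invertibility of $\tX^T\tX$ (given $\mathrm{rank}(X)\ge p_1$), which the paper takes for granted.
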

\begin{proof}
Let $\tX = WXR$ and $\tY = WY$. 
Since $\hte_{RW}$ minimizes 
$\frac{1}{n_1}||\tX \te - \tY||^2$, 
it is not hard to show 
\begin{equation}
\label{eq:claim02_03}
\hte_{RW} = (\tX^T \tX)^{-1} \tX^T \tY,   
\end{equation}
and thus 
\begin{equation}
\label{claim2:01}
\E_{Y \mid R, W} [\hte_{RW}] 
= (\tX^T \tX)^{-1} \tX^T \E[\tY].
\end{equation}
For $\hte_*$, recall it minimizes 
$L_{RW}(\te)$ and that 
\begin{equation}
L_{RW}(\te) 
= \frac{1}{n_1} 
\E_{Y \mid R, W} 
||\tX \te - \tY||^2
= \frac{1}{n_1} 
\left( ||\tX \te||^2 + 
\E ||\tY||^2 - 2 \E[\tY]^T(\tX \te)\right).
\end{equation}
By taking derivative of the right 
side w.r.t. $\te$ and setting it 
to zero, it is not hard to see 
\begin{equation}
\label{claim2:02}
\hte_* = (\tX^T \tX)^{-1} 
\tX^T \E_{Y \mid R, W} [\tY].  
\end{equation}
Combining (\ref{claim2:01}) 
and (\ref{claim2:02}) proves the claim. 
\end{proof}

\begin{claim}
\label{claim:03}
$\E_{\hte_{RW} \mid R,W} 
[L_{RW}(\hte_{RW})] \leq 
\E \frac{1}{n_1} || W XR 
\hte_{RW} - W XR \hte_* ||^2 
+ L_{RW}(\hte_*)$.
\end{claim}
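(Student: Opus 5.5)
Conditionally on $(R,W)$, write $\tX = WXR$ and $\tY = WY$ as in Claim \ref{claim:02}. The natural route is a bias--variance style decomposition of $L_{RW}$ evaluated at the random point $\hte_{RW}$. Care is needed here: $\hte_{RW}$ is itself a function of $Y$, so the loss at $\hte_{RW}$ must be read as $L_{RW}(\te)$ evaluated at $\te = \hte_{RW}$, with the outer expectation then taken over $\hte_{RW}$. This is the same convention used in Theorem \ref{thm:rpbound}.

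The first step is to expand $L_{RW}$ around $\hte_*$ for a fixed $\te$. Write $\tX\te - \tY = \tX(\te - \hte_*) + (\tX\hte_* - \tY)$ and expand the square:
\begin{equation*}
L_{RW}(\te) = \frac{1}{n_1}\|\tX(\te-\hte_*)\|^2 + L_{RW}(\hte_*) + \frac{2}{n_1}(\te-\hte_*)^T\tX^T\left(\tX\hte_* - \E_{Y\mid R,W}[\tY]\right).
\end{equation*}
The cross term vanishes identically, for every $\te$. The reason is that $\hte_* = (\tX^T\tX)^{-1}\tX^T\E[\tY]$ by (\ref{claim2:02}), which gives the normal equation $\tX^T(\tX\hte_* - \E[\tY]) = 0$. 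Equivalently, this is the first-order optimality of $\hte_*$ for the convex quadratic $L_{RW}$. So for every deterministic $\te$ we have the exact identity
\[
L_{RW}(\te) = \frac{1}{n_1}\|\tX(\te-\hte_*)\|^2 + L_{RW}(\hte_*).
\]

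The second step substitutes $\te = \hte_{RW}$ and takes $\E_{\hte_{RW}\mid R,W}$ on both sides. This yields the claim with equality, which in particular gives the stated inequality. Claim \ref{claim:02}, $\E[\hte_{RW}] = \hte_*$, is not needed for this identity itself. If instead one wants to interpret $L_{RW}(\hte_{RW})$ with the same $Y$ inside both $\hte_{RW}$ and the loss, the cross term becomes $\E[(\hte_{RW}-\hte_*)^T\tX^T(\tX\hte_* - \tY)]$. Then Claim \ref{claim:02} and the noise moments $\E[E]=0$, $\E[EE^T]=\sigma^2 I_n$ would be used to control it.

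The main obstacle is exactly this interpretation issue. Under the paper's convention (loss as a deterministic function, then averaged), the argument is a one-line normal-equation cancellation. I would state that convention explicitly and not attempt the in-sample version, whose cross term is not sign-definite in general.
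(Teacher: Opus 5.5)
Your argument is correct for the reading you adopt, and it takes a different route from the paper's. You treat $L_{RW}$ as a deterministic function of $\te$ given $(R,W)$. Then the normal equation $\tX^T(\tX\hte_*-\E[\tY])=0$ kills the cross term for every fixed $\te$, so you get the claim with equality. You need none of Claim~\ref{claim:02}, the closed form of $\hte_{RW}$, or the noise moments.

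The paper does not use that convention here, so your identification of "the paper's convention" is inaccurate for this claim. It writes the left side as $\frac{1}{n_1}\E_{\hte_{RW},Y\mid R,W}\|\tX\hte_{RW}-\tY\|^2$, with the \emph{same} $Y$ in the estimator and in the loss. It expands around $\tX\hte_*$ and uses Claim~\ref{claim:02} to cancel $\E[\hte_{RW}]^T\tX^T\tX\hte_*$ against $\hte_*^T\tX^T\tX\hte_*$. It then substitutes $\hte_{RW}=(\tX^T\tX)^{-1}\tX^T\tY$ to show the remaining cross term is $-\E[E^TW^TPWE]\le 0$, with $P=\tX(\tX^T\tX)^{-1}\tX^T$. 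So the paper proves exactly the in-sample version you set aside.

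Your closing assertion that the in-sample cross term is not sign-definite is wrong. Since $\tX\hte_{RW}-\tX\hte_*=PWE$ and $\tX\hte_*-\tY=P\E[\tY]-\tY$, the cross term equals $-\|PWE\|^2\le 0$ for every realization of $E$. More simply still, $\hte_{RW}$ minimizes $\|\tX\te-\tY\|^2$ for each $Y$, so $\|\tX\hte_{RW}-\tY\|^2\le\|\tX\hte_*-\tY\|^2$ pointwise. The in-sample claim then follows immediately, in fact with slack $\frac{2}{n_1}\E\|PWE\|^2$.

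Comparing the two routes: yours gives the sharp identity and matches the usual fixed-design prediction risk. Under that reading the variance term bounded in Claim~\ref{claim:04} is exactly what the left side contains. The paper's route handles the same-$Y$ reading, where the claim is true but loose. Since the claim holds under both readings, replace your last paragraph with the one-line optimality argument rather than dismissing the in-sample case.
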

\begin{proof}
Let $\tX = WXR$ and $\tY = WY$. 
Observe that 
\begin{equation}
\label{eq:claim03:prf_01}
\begin{split}
n_1 \cdot \E_{\hte_{RW} \mid R,W} 
[L_{RW}(\hte_{RW})] 
& = \E_{\hte_{RW}, Y \mid R, W} 
||\tX \hte_{RW} - \tY||^2 \\ 
& = \E ||\tX \hte_{RW} - \tX \hte_{*} 
+ \tX \hte_{*} - \tY||^2 \\
& = \E ||\tX \hte_{RW} - \tX \hte_{*} 
||^2 + \E || \tX \hte_{*} - \tY||^2 
+ 2 \E (\tX \hte_{RW} - \tX \hte_{*} )^T 
(\tX \hte_{*} - \tY). 
\end{split}    
\end{equation}
The last term is non-positive due 
to the following arguments: 
\begin{equation}
\begin{split}
\E_{\hte_{RW}, Y \mid R, W} 
(\tX \hte_{RW} - \tX \hte_{*} )^T 
(\tX \hte_{*} - \tY)    
& = \E ( \hte_{RW}^T \tX^T \tX \hte_* 
- \hte_{RW}^T \tX^T \tY
- \hte_{*}^T \tX^T \tX \hte_{*} 
+ \hte_{*}^T \tX^T \tY)\\
& =  \E[\hte_{RW}]^T \tX^T \tX \hte_* 
- \E[ \hte_{RW}^T \tX^T \tY]
- \hte_{*}^T \tX^T \tX \hte_{*} 
+ \hte_{*}^T \tX^T \E[\tY]\\
& = - \E[ \hte_{RW}^T \tX^T \tY] 
+ \hte_{*}^T \tX^T \E[\tY]\\
& = - \E[ \tY^T \tX (\tX^T \tX) 
\tX^T \tY] + \E[\tY]^T \tX (\tX^T 
\tX) \tX^T \E[\tY]\\ 
& = - \E[ E^T W^T \tX (\tX^T \tX) 
\tX^T W E]\\
& \leq 0.
\end{split}
\end{equation}
In above, the third line 
applies $\E_{\hte_{RW} \mid R, W}[\hte_{RW}] = \hte_*$ (Claim 
\ref{claim:02}) and the 
fifth line applies 
$\tY = W X \be_* + WE$; 
the last inequality holds 
since $W^T \tX (\tX^T \tX) 
\tX^T W$ is semi-positive 
definite. Plugging 
above back to (\ref{eq:claim03:prf_01})
proves the claim. 
\end{proof}

\begin{claim}
\label{claim:04}
$\E_{\hte_{RW} \mid R,W} 
\frac{1}{n_1} 
|| WXR \hte_{RW} - WXR \hte_* ||^2 
\leq \sigma^2 \frac{p_1}{n_1} 
\lambda_{w \uparrow}^2$.
\end{claim}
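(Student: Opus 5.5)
Write $\tX = WXR$ and $\tY = WY = WX\be_* + WE$, and use the closed forms from the proof of Claim \ref{claim:02}: $\hte_{RW} = (\tX^T\tX)^{-1}\tX^T\tY$ and $\hte_* = (\tX^T\tX)^{-1}\tX^T\E_{Y\mid R,W}[\tY]$. We assume, as Claim \ref{claim:02} implicitly does, that $\tX^T\tX$ is invertible; since $p_1 < \min(p,n_1)$ this holds almost surely. Let
\begin{equation*}
P = \tX(\tX^T\tX)^{-1}\tX^T \in \R^{n_1\times n_1}
\end{equation*}
be the orthogonal projection onto the column space of $\tX$. Since $\tY - \E[\tY] = WE$, the deviation satisfies
\begin{equation*}
\tX\hte_{RW} - \tX\hte_* = P(\tY - \E[\tY]) = PWE.
\end{equation*}
This reduces the claim to computing $\E_{Y\mid R,W}\|PWE\|^2$ with $R,W$ held fixed.

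The main step is a trace computation. Using the noise facts stated before Claim \ref{claim:01}, namely $\E[E]=0$ and $\E[EE^T]=\sigma^2 I_n$,
\begin{equation*}
\E\|PWE\|^2 = \E\,\mtr\!\left(E^T W^T P^T P W E\right) = \sigma^2\, \mtr\!\left(W^T P W\right) = \sigma^2\,\mtr\!\left(P\, W W^T\right),
\end{equation*}
where we used $P^TP = P$ and the cyclic property of the trace.

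To bound $\mtr(PWW^T)$, write $P = UU^T$ with $U\in\R^{n_1\times p_1}$ having orthonormal columns $u_1,\dots,u_{p_1}$; rank $p_1$ holds by invertibility of $\tX^T\tX$. Then
\begin{equation*}
\mtr(PWW^T) = \sum_{j=1}^{p_1} \|W^T u_j\|^2 \le p_1\,\lambda_{w\uparrow}^2,
\end{equation*}
because $\|W^T u\| \le \lambda_{w\uparrow}\|u\|$ for every unit vector $u$. Here $W^T$ and $W$ have the same nonzero singular values, so the observation stated before Claim \ref{claim:01} applies with $W^T$ in place of $W$. Dividing by $n_1$ gives $\sigma^2 p_1\lambda_{w\uparrow}^2/n_1$, which is the claim.

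The point needing most care is this last step. One must bound by the \emph{largest} singular value acting through $W^T$, not $W$, which is why the dimension $n_1$ of $P$ rather than $n$ enters. One must also keep the rank of $P$ equal to $p_1$, which is what yields the factor $p_1$ rather than $n_1$. Everything else is routine once the closed forms from Claim \ref{claim:02} are in hand.
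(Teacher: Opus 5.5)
Your proposal is correct and follows essentially the same route as the paper: the same closed forms give $\tX\hte_{RW}-\tX\hte_* = \tX(\tX^T\tX)^{-1}\tX^T WE$, the same trace computation yields $\sigma^2\,\mtr(PWW^T)/n_1$, and the bound comes from the largest singular value of $W$ together with $\mtr(P)=p_1$. Your orthonormal-basis step just unpacks the trace inequality $\mtr(BA)\le\lambda_{A\uparrow}\mtr(B)$ that the paper cites; one small caveat is that almost-sure invertibility of $\tX^T\tX$ also requires $\mathrm{rank}(X)\ge p_1$, although the paper also assumes invertibility without stating it.
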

\begin{proof}
Let $\tX = WXR$ and $\tY = WY$. 
By design, there is 
\begin{equation}
\tY - \E_{Y \mid R, W}[\tY] 
= (W X \be_* + W E)  
- (W X \be_* + W \E[E]) 
= WE.
\end{equation}
Combining above with 
(\ref{eq:claim02_03}, 
\ref{claim2:02}), we have 
\begin{equation}
\hte_{RW} - \hte_{*} 
= (\tX^T \tX)^{-1} \tX^T 
(\tY - \E_{Y \mid R, W} [\tY]) 
= (\tX^T \tX)^{-1} \tX^T W E.
\end{equation}
Note the left side of this claim 
is $\frac{1}{n_1} 
\E_{\hte_{RW} \mid R,W} 
|| \tX \hte_{RW} - \tX \hte_* ||^2$.  
Plugging in the above, we have 
\begin{equation}
\begin{split}
\frac{1}{n_1} \E_{\hte_{RW} \mid R,W}  
|| \tX \hte_{RW} - \tX \hte_* ||^2 
& = \frac{1}{n_1} \E\ ||\tX (\tX^T \tX)^{-1} \tX^T W E||^2\\ 
& = \frac{1}{n_1} \E\ \mtr ( 
E^T W^T \tX (\tX^T \tX)^{-1} \tX^T
\ \tX (\tX^T \tX)^{-1} \tX^T W E )\\
& = \frac{1}{n_1} 
\E\ \mtr ( \tX (\tX^T \tX)^{-1} \tX^T
\tX (\tX^T \tX)^{-1} \tX^T\ W E E^T W^T )\\
& = \frac{1}{n_1} 
\mtr ( \tX (\tX^T \tX)^{-1} \tX^T
\tX (\tX^T \tX)^{-1} \tX^T W\ 
\E [E E^T]\ W^T )\\
& = \frac{\sigma^2}{n_1} 
\mtr ( \tX (\tX^T \tX)^{-1} \tX^T
\tX (\tX^T \tX)^{-1} \tX^T W W^T )\\
& \leq \frac{\sigma^2}{n_1} 
\lambda^2_{w \uparrow}\ 
\mtr ( \tX (\tX^T \tX)^{-1} \tX^T
\tX (\tX^T \tX)^{-1} \tX^T )\\
& = \frac{\sigma^2}{n_1} 
\lambda^2_{w \uparrow}\ 
\mtr ( I_{p_1} ) = \sigma^2 \frac{p_1}{n_1} 
\lambda^2_{w \uparrow}.
\end{split}
\end{equation}
In above, the fourth line is by 
the linearity of expectation,  
the fifth line is by the fact 
that $\E_{Y \mid R, W} [EE^T] 
= \sigma^2 I_{n}$, and the 
sixth line is by an elementary 
trace inequality $\mtr(B A) 
\leq \lambda_{A \uparrow} 
\mtr(B)$. 
\end{proof}

\begin{claim}
\label{claim:05}
There exists a $W_0 \in \R^{n_1 \times 
n}$ such that
$\E_{R, W, \hte_{RW}} 
[L_{RW}(\hte_{RW})] \geq \frac{n}{n_1} 
\lambda^2_{w_0 \downarrow} 
\E_{R, \hte_{RW}} [L_{R}(\hte_{RW})]$. 
\end{claim}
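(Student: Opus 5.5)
The plan is to combine the pointwise inequality of Claim \ref{claim:01} with an intermediate-value (mean-value) argument over the distribution of $W$.

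\textbf{Step 1 (pointwise bound).} Fix a realization of $(R,W)$. Claim \ref{claim:01} is meant to give $L_{RW}(\hte_{RW}) \geq \frac{n}{n_1}\lambda^2_{w\downarrow} L_R(\hte_{RW})$. As written, its proof only exploits $\|Wz\|\geq \lambda_{w\downarrow}\|z\|$ inside the conditional expectation over $Y$, with $z = XR\hte_{RW}-Y$, and then rescales $\frac{1}{n}$ versus $\frac{1}{n_1}$. Here $\lambda_{w\downarrow}$ must be read as the smallest singular value of $W$ acting on the relevant vectors. I will therefore restate the lower bound directly:
\[
L_{RW}(\hte_{RW}) = \frac{1}{n_1}\E_{Y\mid R,W}\|W(XR\hte_{RW}-Y)\|^2 \geq \frac{n}{n_1}\lambda^2_{w\downarrow}\,\frac{1}{n}\E\|XR\hte_{RW}-Y\|^2 = \frac{n}{n_1}\lambda^2_{w\downarrow} L_R(\hte_{RW}).
\]

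\textbf{Step 2 (average).} Taking $\E_{R,W,\hte_{RW}}$ of both sides gives the first inequality in (\ref{eq:road_09}), namely
\[
\E[L_{RW}(\hte_{RW})] \geq \frac{n}{n_1}\E[\lambda^2_{w\downarrow} L_R(\hte_{RW})].
\]

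\textbf{Step 3 (produce $W_0$).} This is where the mean-value idea enters. Define the weighted average
\[
\mu = \frac{\E_{R,W,\hte_{RW}}[\lambda^2_{w\downarrow} L_R(\hte_{RW})]}{\E_{R,\hte_{RW}}[L_R(\hte_{RW})]}.
\]
Because $L_R\geq 0$, $\mu$ is a convex combination of the values $\lambda^2_{w\downarrow}$. It therefore lies between the infimum and supremum of $\lambda^2_{w\downarrow}$ over the support of $W$, which is all of $\R^{n_1\times n}$. The map $W\mapsto \lambda_{w\downarrow}$ is continuous, and on this connected set it takes every value in $[0,\infty)$; for example, $tW'$ with $t\ge 0$ for a fixed full-rank $W'$ already covers all of them. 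By the intermediate value theorem there is a $W_0$ with $\lambda^2_{w_0\downarrow}=\mu$, which yields the equality in (\ref{eq:road_09}).

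The case where the denominator of $\mu$ vanishes is trivial: then $L_R(\hte_{RW})=0$ almost surely, both sides are $0$, and any $W_0$ works.

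\textbf{Main obstacle.} The delicate point is Step 1, because the bound $\|Wz\|\ge\lambda_{w\downarrow}\|z\|$ fails for generic $z\in\R^n$ when $n_1<n$, since $W$ then has a nontrivial kernel. I would first try to justify it by interpreting $\lambda_{w\downarrow}$ as the smallest nonzero singular value and restricting $z$ to the row space of $W$, following the paper's stated convention. If that does not go through, the fallback is to define $\lambda^2_{w\downarrow}$ as the ratio $L_{RW}/(\frac{n}{n_1}L_R)$ itself, which keeps Step 3 intact.

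The denominator in the final expression is written as $\E_{R,\hte_{RW}}$. Since $L_R(\hte_{RW})$ still depends on $W$ through $\hte_{RW}$, I will interpret this expectation as also averaging over $W$, which is consistent with how it is used in the theorem.
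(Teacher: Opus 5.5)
Your argument is the paper's argument: a pointwise bound from $\|Wz\|\ge\lambda_{w\downarrow}\|z\|$, then averaging, then a weighted-mean-value choice of $W_0$. The gap is the one you flag in Step 1, and the paper uses the same inequality as a ``basic observation without proof''. Neither of your repairs closes it.

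Read $\lambda_{w\downarrow}$ as the paper does, as the $n_1$-th singular value, $\approx 1-r$. You then cannot restrict $z=XR\hte_{RW}-Y$ to the row space of $W$, because $z$ contains the noise $E$, which is isotropic in $\R^n$. Concretely, for fixed $\te$ and $v=XR\te-X\be_*$,
\[
n_1L_{RW}(\te)=\|Wv\|^2+\sigma^2\,\mtr(WW^T),\qquad nL_R(\te)=\|v\|^2+n\sigma^2 .
\]
If $W$ has orthonormal rows, then $\lambda_{w\downarrow}=1$, $\|Wv\|\le\|v\|$ and $\mtr(WW^T)=n_1$. So Step 1 fails for every $\te$ as soon as $n_1<n$.

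Averaging does not help. Take $\be_*=0$, so $\hte_*=0$. Claim~\ref{claim:04} gives $\E\|Wv\|^2\le\sigma^2p_1\E[\lambda^2_{w\uparrow}]$ at $\te=\hte_{RW}$. Together with $\E\,\mtr(WW^T)=n_1$ this yields $\frac{n_1}{n}\E[L_{RW}(\hte_{RW})]\le\frac{\sigma^2}{n}(n_1+p_1\E[\lambda^2_{w\uparrow}])\approx\sigma^2r^2$. On the other side, $\E[\lambda^2_{w\downarrow}L_R(\hte_{RW})]\ge\sigma^2\E[\lambda^2_{w\downarrow}]\approx\sigma^2(1-r)^2$. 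So Step 2 is false whenever $r<1/2$ and $p_1\ll n$.

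If you instead read $\lambda_{w\downarrow}$ as the smallest singular value of $W$ as a map on $\R^n$, it equals $0$ for $n_1<n$. The argument then only produces $\lambda_{w_0\downarrow}=0$.

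The literal existence statement does hold, and your own Step 3 proves it without Steps 1--2. Since $\lambda^2_{w\downarrow}$ takes every value in $[0,\infty)$, choose $W_0$ with $\lambda^2_{w_0\downarrow}=\frac{n_1}{n}\E[L_{RW}(\hte_{RW})]/\E[L_R(\hte_{RW})]$, which gives equality. Or take $W_0=0$, which makes the right side $0$. Your ``fallback'' is this argument in disguise. Do not phrase it as redefining $\lambda^2_{w\downarrow}$: the ratio $L_{RW}/(\frac{n}{n_1}L_R)$ depends on $R$ and $\hte_{RW}$, not on $W$ alone, and is not a singular value.

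The catch is that the claim then carries no information about $W_0$. The paper later uses that $\lambda^2_{w_0\downarrow}$ is the $L_R$-weighted mean of $\lambda^2_{w\downarrow}$, hence $\approx(1-r)^2$. That is exactly what Step 2 would have to deliver, and for $r<1/2$ it cannot.

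A meaningful replacement keeps the noise term separate. Let $s_S$ be the smallest singular value of $W$ restricted to the at most $(p_1+1)$-dimensional span $S$ of the columns of $XR$ and $X\be_*$. Since $v\in S$, you get $L_{RW}(\te)\ge\frac{n}{n_1}\min\{s_S^2,\ \mtr(WW^T)/n\}\,L_R(\te)$ for every $\te$.
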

\begin{proof}
First observe that 
\begin{equation}
\E_{R, W, \hte_{RW}} 
[L_{RW}(\hte_{RW})] 
= \frac{1}{n_1} 
\E_{R, W, \hte_{RW}, Y} 
||WXR \hte_{RW} - WY ||^2 
\geq \frac{1}{n_1} 
\E \lambda^2_{w \downarrow} 
||XR \hte_{RW} - Y||^2
\end{equation}    
By the weighted mean value theorem for integrals, there exists a $W_0 \in \R^{n_1 \times n}$ such that 
\begin{equation}
\frac{1}{n_1} 
\E_{R, W, \hte_{RW}, Y}[   
\lambda^2_{w \downarrow} 
||XR \hte_{RW} - Y||^2] 
= \frac{1}{n_1} 
\lambda^2_{w_0 \downarrow} 
\E_{R, \hte_{RW}, Y} 
||XR \hte_{RW} - Y||^2
= \frac{n}{n_1} 
\lambda^2_{w_0 \downarrow} 
\E_{R, \hte_{RW}} [L_R(\hte_{RW})], 
\end{equation}    
where the last equality 
holds because $L_R(\hte_{RW}) 
= \E_{Y \mid R, \hte_{RW}} 
\frac{1}{n} ||XR \hte_{RW} - Y||^2$.
\end{proof}
 
\begin{remark}
\label{rem:priorkaban}
$\E_{R} [L_{R}(R^T\be_*)] 
\leq L(\be_*) + \frac{1}{p_1} 
||\be_*||_M$, 
where $M = \Sigma + (1+\kappa)\, 
tr(\Sigma) I_p$ and 
$\Sigma = \frac{1}{n} X^T X$.     
\end{remark}
\begin{proof}
This is an intermediate result 
in the proof of \citep[Theorem 1]{kaban2014new}: it was shown 
through (15-17) that 
$L_R(R^T \be_*) 
= L(\be_*) + 
\frac{1}{n} ||X \be_* 
- X R R^T \be_*||^2$ 
and through (19-24) that 
$\frac{1}{n} \E_{R} \left[ ||X \be_* 
- X R R^T \be_*||^2 \right]
\leq \frac{1}{p_1} ||\be_*||
^2_{M}$. Combining both yields 
the remark. (In the original 
proof, $w$ is the true model 
which is $\be_*$ here; $R$ 
is transposed.) 
\end{proof}

\section{Numerical Results}

We empirically tested both 
estimators on real-world data 
and reported their expected 
excess losses and loss gap. 
Since $Y$ is fixed on real-world 
data, all expectations were estimated 
based on random samples of $R$ and $W$, 
and $\be_*$ was estimated as the 
minimizer of $\frac{1}{n}||X \be - Y||^2$. 
All reported results are averaged 
over at least 20 samples of $(R,W)$. 

Here, we present and discuss results 
on the Digit data set; more results 
are in the appendix. The 
Digit set is available on the Python library 
`sklearn.datasets'. It has 1797 
images, each described by 8-by-8 
pixels and containing a digit in 
\{0, \ldots, 9\} (which is the 
domain of $Y$). 
For numerical stability, three pixels 
shared by all images were removed. 

Results are shown in Figure 
\ref{fig:digit}. The left 
subfigure shows expected 
excess loss with $n_1/n = 0.5$, 
where the shadowed 
region represents standard deviation. 
As can be seen, both losses decrease 
as $p_1$ increases, and $\hte_{RW}$ 
generally performs worse than $\hte_{R}$. 
The middle subfigure shows the loss gap 
decreases as $p_1$ increases, and 
seems to slightly bounces back around
$p_1 = 35$. The right subfigure also 
shows the loss gap but based on 
$n_1/n = 0.25$. It can be seen the gap 
bounces back earlier around $p_1 = 25$. 
These observations match the 
theoretical implications discussed 
in Section \ref{sec:mainresult}. 

More results are presented and 
discussed in the appendix. 

\begin{figure}
\centering
\includegraphics[width=.32\linewidth]{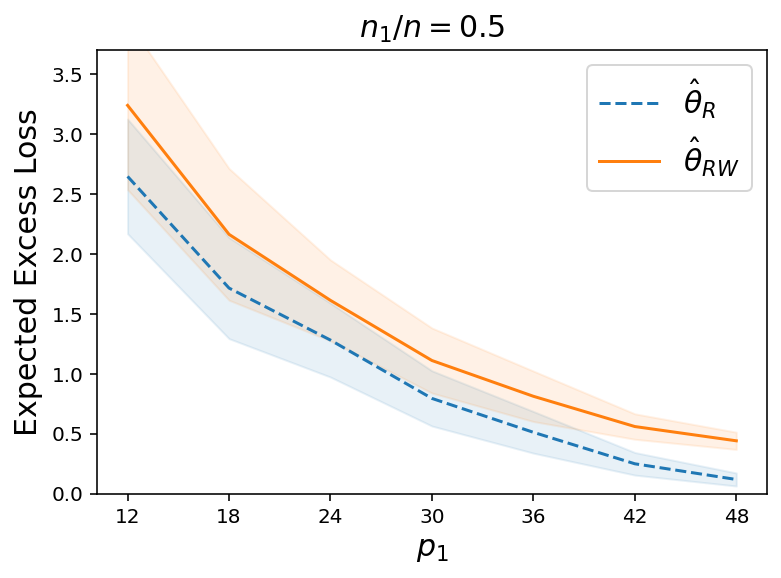}
\includegraphics[width=.32\linewidth]{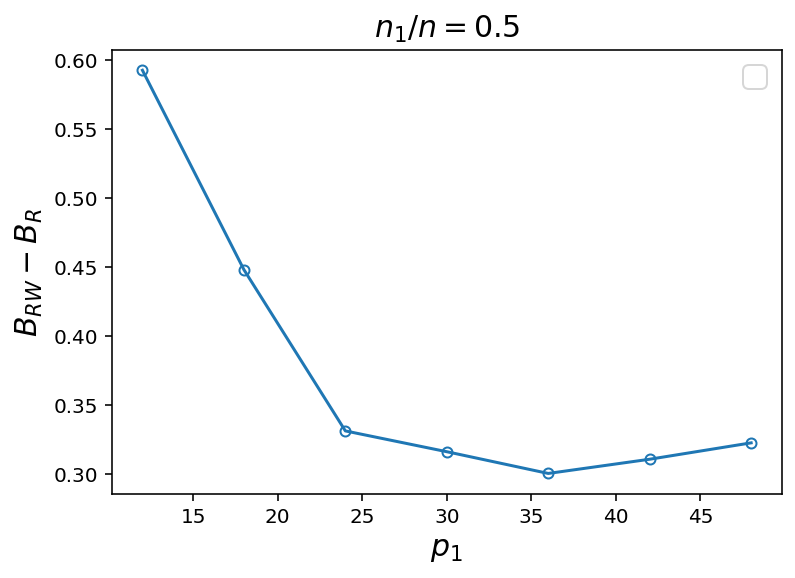}
\includegraphics[width=.32\linewidth]{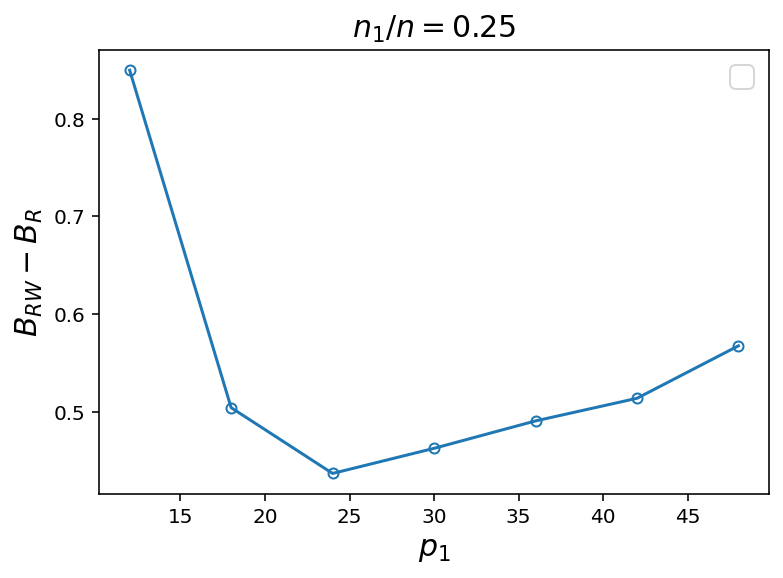}
\vspace{-5pt}
\caption{Results on the Digit 
Data Set.}
\label{fig:digit}
\end{figure}

\bibliographystyle{cas-model2-names}
\bibliography{reference}

\newpage 

\newpage 
\appendix 

\section{Additional 
Numerical Results}

We first tested on the Digit data set 
with more choices of $n_1/n$. Figure \ref{fig:digit2} 
shows the results (averaged over 50 random choices 
of $(R,W)$) for $n_1/n \in \{0.05, 0.1, 0.15, 0.3, 0.5, 0.75\}$. 
It is clear that, when $n_1/n$ is smaller, the gap behavior 
transits from decreasing to increasing earlier. When 
$n_1/n$ is as small as $0.05$, the gap only increases. 

We then evaluated both estimators 
on two other real-world data sets 
MNIST\footnote{\url{https://docs.pytorch.org/vision/main/datasets.html}} and 
Superconductivty\footnote{\url{https://doi.org/10.24432/C53P47}}. 
Results are shown in Figure 
\ref{fig:SUP} and Figure 
\ref{fig:MNIST} respectively. 
Similar patterns are 
observed on both sets i.e., 
$\hte_{RW}$ is generally 
worse than $\hte_R$, and 
their gap (as $p_1$ increases) 
first decreases and then 
increases. Then $n_1/n$ 
is smaller, the turnover 
point of their gap is 
also smaller. These observations 
also align with the implications 
of our theoretical results.

\begin{figure}
\centering
\includegraphics[width=.32\linewidth]{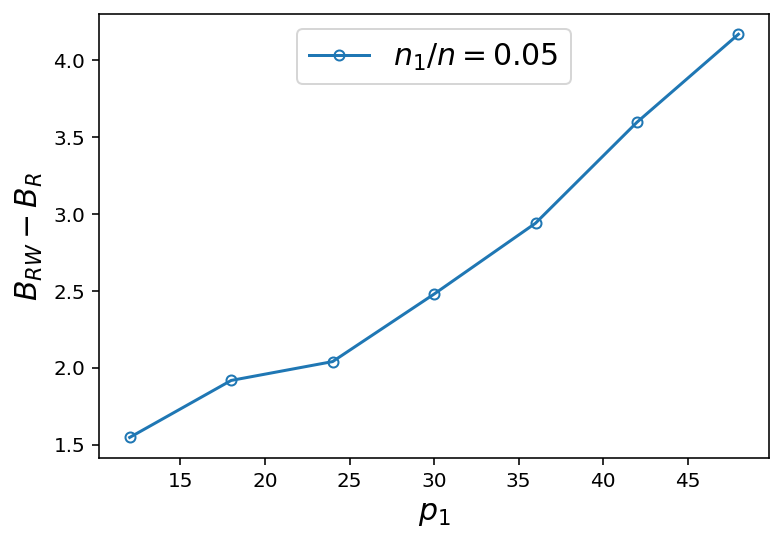}
\includegraphics[width=.32\linewidth]{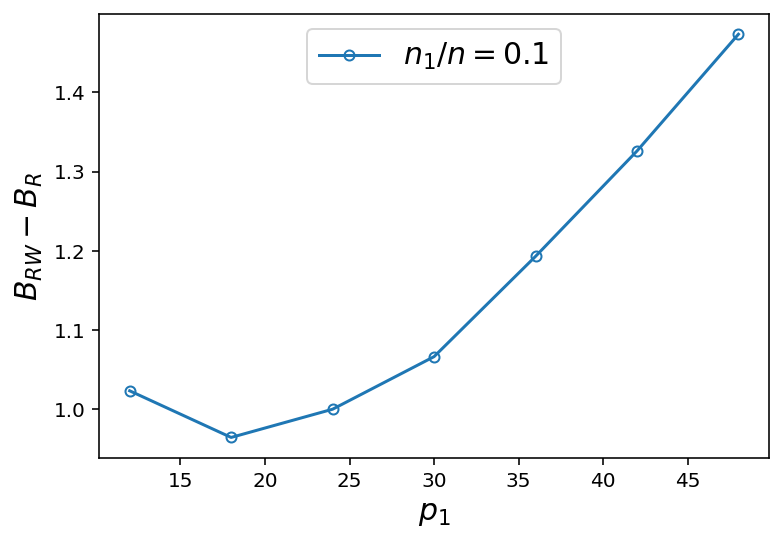}
\includegraphics[width=.32\linewidth]{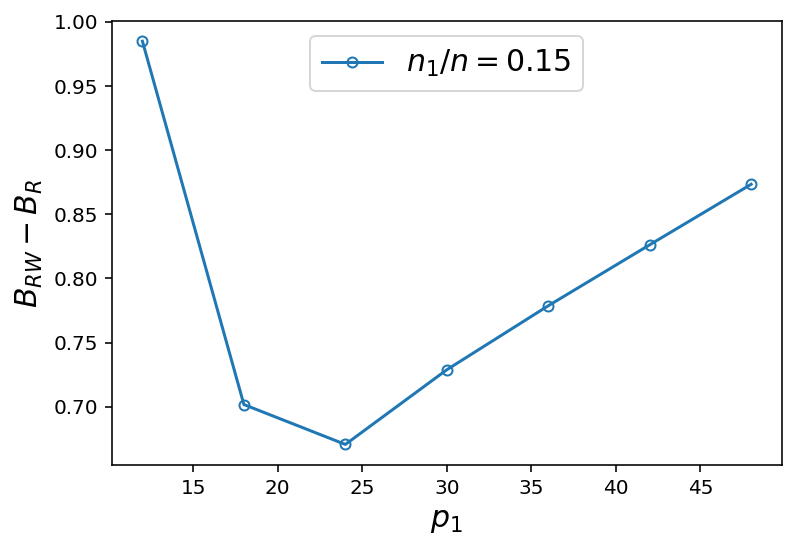}
\includegraphics[width=.32\linewidth]{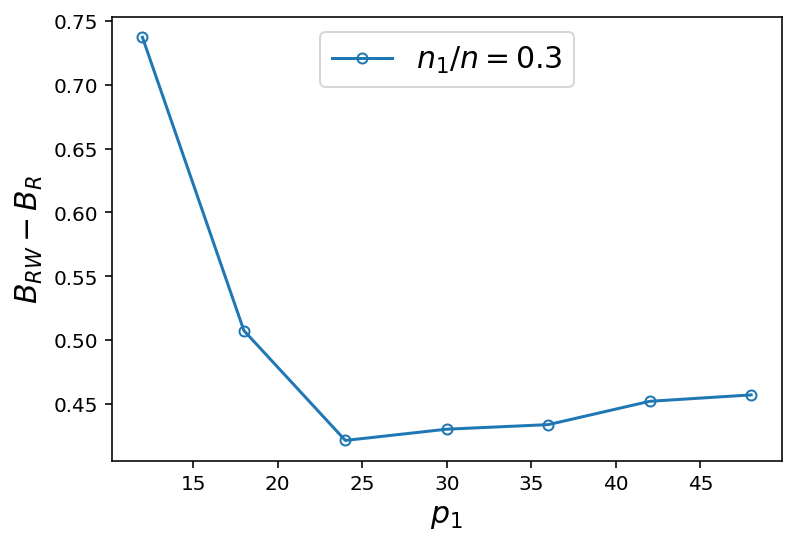}
\includegraphics[width=.32\linewidth]{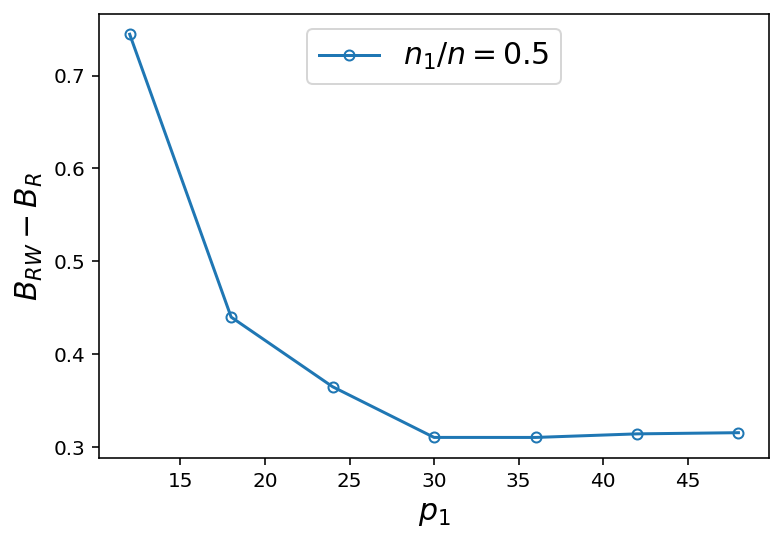}
\includegraphics[width=.32\linewidth]{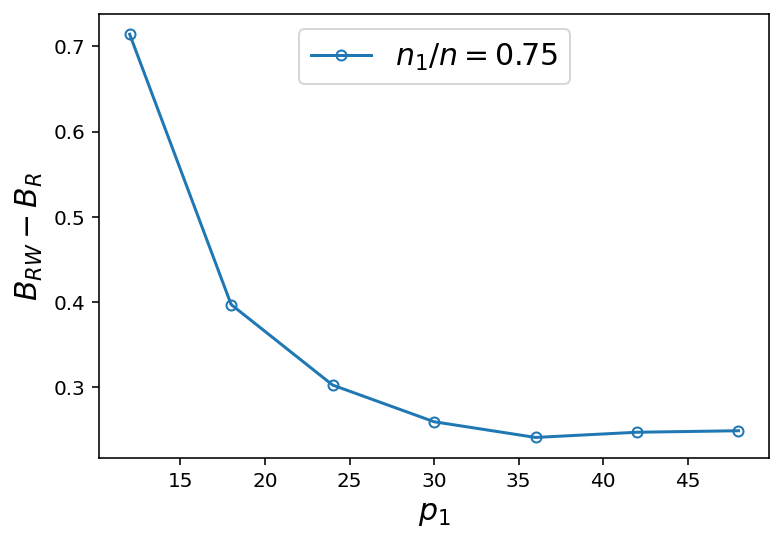}
\vspace{-5pt}
\caption{Loss Gap versus $n_1/n$ on the Digit Data Set.}
\label{fig:digit2}
\end{figure}

\begin{figure}
\centering
\includegraphics[width=.325\linewidth]{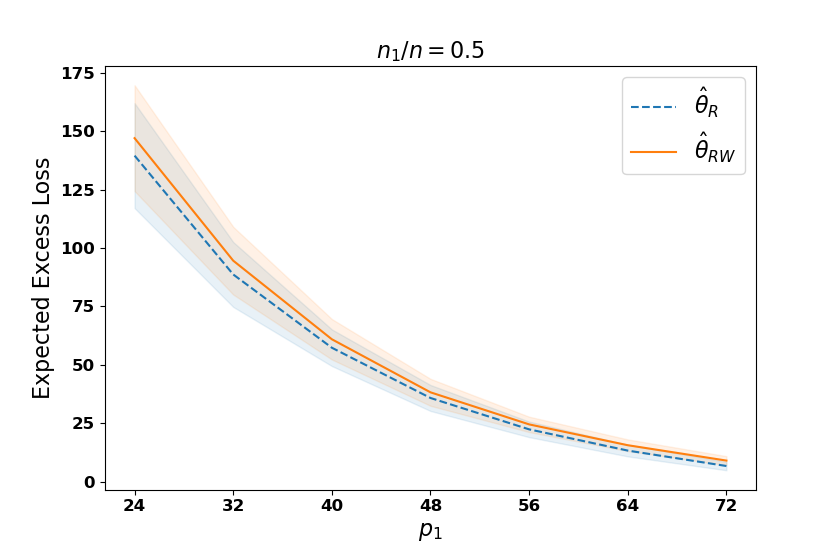}
\includegraphics[width=.325\linewidth]{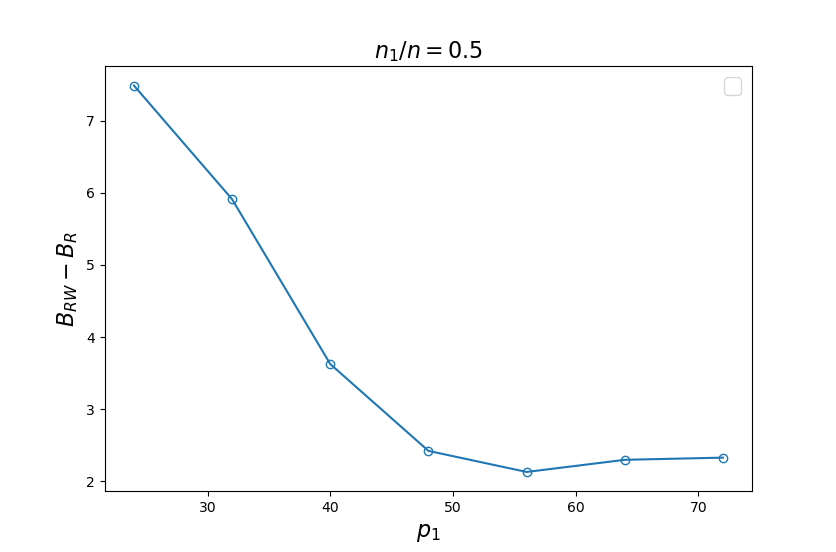}
\includegraphics[width=.325\linewidth]{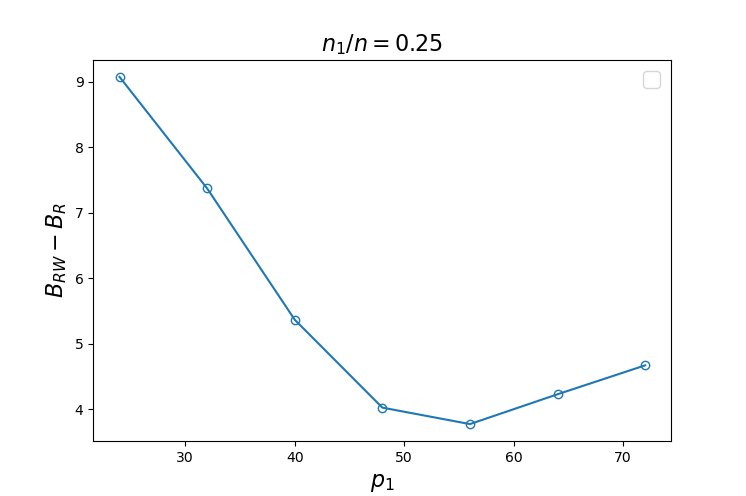}
\vspace{-10pt}
\caption{Results on the Superconductivty  
Data Set.}
\label{fig:SUP}
\end{figure}
\begin{figure}
\centering
\includegraphics[width=.325\linewidth]{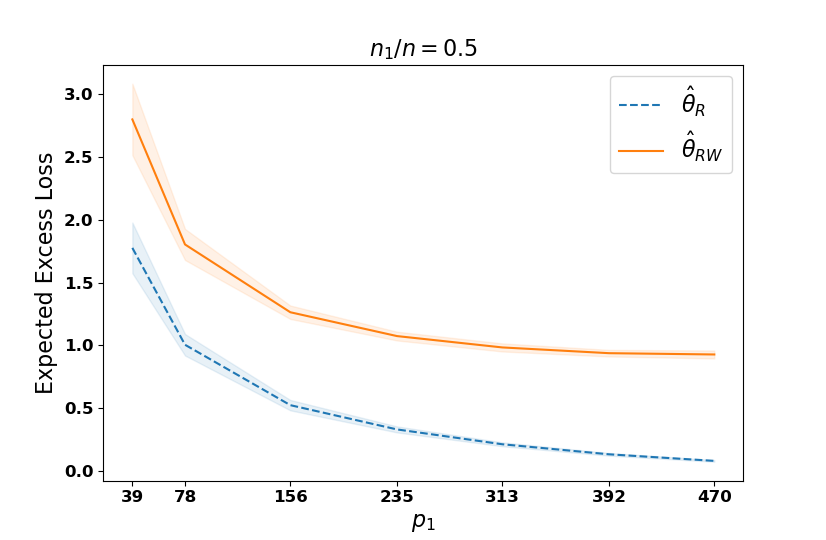}
\includegraphics[width=.325\linewidth]{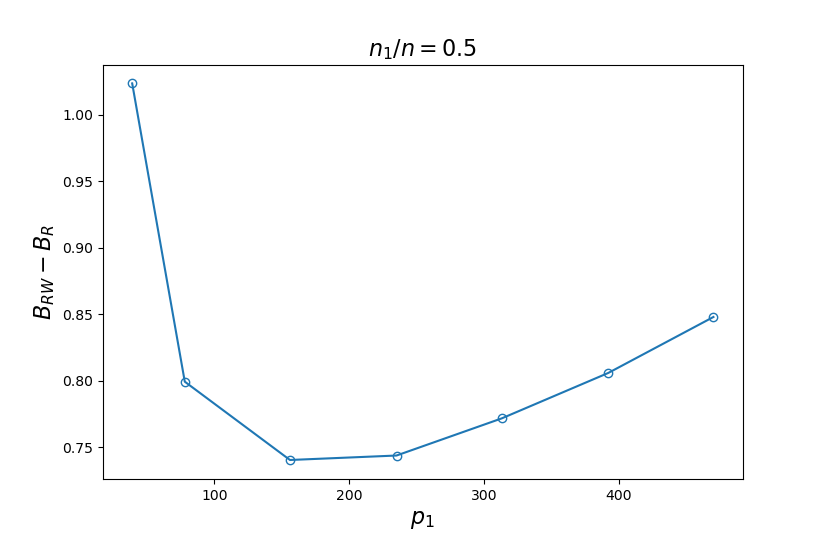}
\includegraphics[width=.325\linewidth]{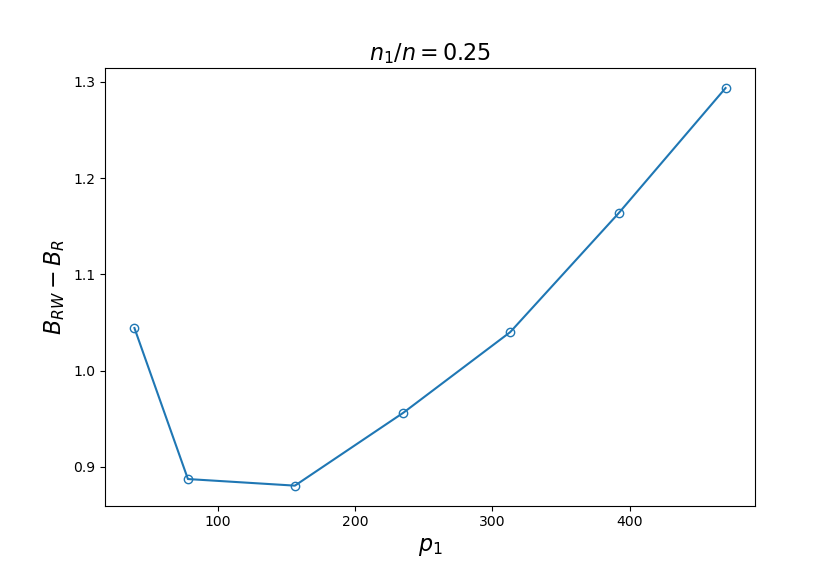}
\vspace{-10pt}
\caption{Results on the MNIST  
Data Set.}
\label{fig:MNIST}
\end{figure}

\end{document}